    \newcommand{\argmin}{\mathop{\mathrm{argmin}}}
    \newcommand{\AC}{\mathop{\mathrm{AC}}}
    \newcommand{\lip}{\mathop{\mathrm{lip}}}
    \newcommand{\bound}{\mathop{\mathrm{bound}}}
    \newcommand{\R}{\mathbb{R}}
    \newtheorem{theorem}{\bf Theorem}[section]
    \newtheorem{lemma}{\bf Lemma}[section]
    \newtheorem{corollary}{\bf Corollary}[section]
    \newtheorem{remark}{\bf Remark}[section]
\title[Hopf--Lax formula with discount]
{Hopf--Lax formula for variational problems with non--constant
discount}
\author{Juan Pablo Rinc\'{o}n--Zapatero}
\begin{document}

%%%%%%%%%%%%%%%%%%%%%%%%%%%%%%%%%%%%%%

\begin{abstract}
We provide a Hop--Lax formula for variational problems with
non--constant discount and deduce a dynamic programming equation.
We also study some regularity properties of the value function.
\end{abstract}
\date{\today}

%%%%%%%%%%%%%%%%%%%%%%%%%%%%%%%%%%%%%%

\maketitle

%%%%%%%%%%%%%%%%%%%%%%%%%%%%%%%%%%%%%%

\section{Introduction}
We establish a Hopf--Lax formula for the Cauchy problem
\begin{equation}\label{Diss}
\left\{%
\begin{array}{rl}
    -v_t(x,t)+f(-v_x) + \rho(t)v(x,t)=0, & \hbox{in $\R^n\times (0,T)$;} \\
    v=g, & \hbox{on $\R^n\times \{t=T\}$,} \\
\end{array}%
\right.
\end{equation}
involving a Hamilton--Jacobi equation with a linear dissipation
term, $\rho(t) v(x,t)$, and a terminal condition at time $t=T$.
The function $f:\R^n\longrightarrow\R$ is assumed to be convex and
of class $C^2$, $\rho:[0,T]\longrightarrow (0,1]$ is continuous,
and $g:\R^n\longrightarrow\R$ is globally Lipschitz. The formula
is
\[
v(x,t)=\min_{p\in \R^n} \left\{ \int_t^T  d_t(s) \ell \big(\iota(
d_t^{-1}(s)p)\big)\, ds +  d_t(T) g\left(x+\int_t^T \iota(
d_t^{-1}(s)p)\, ds\right)\right\},
\]
where $\ell$ is the convex conjugate of $f$, $\iota=(\nabla
\ell)^{-1}$, and $  d_t(s)=\exp{(-\int_t^s \rho (r)\, dr)}$.
The formula represents a Lipschitz solution that satisfies the
Cauchy problem almost everywhere.

The classical Hopf--Lax formula applies to the case $\rho\equiv 0$
, and was given by Lax in \cite{Lax}, for $n=1$. It was extended
later to general $n$ by Hopf in \cite{Hopf}. Further
generalizations have maintained $\rho=0$ but have considered
functions $f(t,-v_x)$ also depending on time, \cite{VanTsujiSon};
or functions $f(v,-v_x)$ depending on both $v$ and $v_x$,
\cite{BarronJensenLiu}, with some additional requirements. The
case we analyze in this paper is not covered in any of these
previous works.

Actually, we find a Hopf--Lax formula that applies to more general
Hamilton--Jacobi equations associated to the calculus of
variations problems with variable discount.

These problems arise quite naturally in models of economics.
Consider for instance the following problem: an agent optimally
chooses a consumption path of a given good, with the aim of
maximizing his/her satisfaction. This is measured by an utility
function of consumption, $\ell(u)$, along a given time interval,
$[0,T]$. It is customary in the literature to postulate concavity
in the preferences of the agent, and to suppose that he/she is
impatient, in the sense that the value of the utility attained
today is higher than the utility attained tomorrow. This is the
meaning of introducing a discount factor or impatience rate in the
preferences of the agent.

Empirical studies suggest that people are more impatient about
choices in the short run than in the long run, implying that the
discount rate applied to current choices is higher than the one
applied to far--in--the future choices. Thus, the discount factor
should be taken to be non--constant. Several papers have
considered the non--constant discount case: see e.g. \cite{Barro},
\cite{Karp} or \cite{MarinNavas}\footnote{We consider only papers
on continuous time.}. In \cite{Barro}, the optimal growth model
with time--varying discount is considered, for a particular class
of utility functions. A general problem, with infinite horizon, is
analyzed in \cite{Karp}, whereas \cite{MarinNavas} considers the
finite horizon case with fixed or variable terminal time. The last
two papers use discretization and passage to the limit to find a
Hamilton--Jacobi equation that involves not only the unknown value
function, but also a non--local term involving integration along
the unknown optimal solution. We provide conditions so that the
Hamilton--Jacobi equation involves only the derivatives of the
value function and find the dynamic programming equation by direct
methods.

Given the significance of the non--constant discount preference
rate in economics, it is of interest to analyze in more detail
this
type of variational problems. First, deriving %on firm
%grounds
a Hopf--Lax formula for the solution of the variational problem
(Section  \ref{s:Hopf}); second, establishing a modified dynamic
programming equation, more amenable than the one found in previous
papers (Section \ref{s:DPE}); and third, studying the regularity
of the value function (Section \ref{s:Reg}).

\section{Variational problem with discount}
We follow the presentation in \cite{Evans}. Let the value function
\begin{equation}\label{Problem}
v(x,t)=\inf_{y\in \AC _{x,t}} \left\{ \int_t^T  d_t(s) \ell (\dot
y(s))\, ds +  d_t(T) g(y(T))\right\},
\end{equation}
where
\[
{\AC}_{x,t}=\{y:[t,T]\longrightarrow \R^n\,:\, y=y(s) \mbox{
absolutely continuous, }y(t)= x\}.
\]
A typical element of this set will be called an arc. We will
impose the following conditions.

\begin{description}
    \item[A1] $\ell:\R^n\longrightarrow \R$ is $C^2$, strictly convex, and $\displaystyle
    \lim_{|u|\to \infty} \frac {\ell (u)}{|u|}=\infty$;
    \item[A2] $g$ is globally Lipschitz in $\R^n$;
    \item[A3] $ d:[0,T]\times [0,T]\longrightarrow
    (a,1]$, with $a>0$, is Lipschitz continuous with
    $ d_t(t)=1$ for each $t$.
\end{description}

A straightforward interpretation of \eqref{Problem} has been done
in the Introduction: a single agent, with time--varying preference
rate, chooses optimally along time. In another reading, there is a
continuum of agents, each one labelled by $t\in [0,T]$; each agent
(or generation $t$) applies a possibly different discount factor,
$ d_t$, in the calculation of the utility flow from $t$ onwards.
At time $T$ the optimization process finishes and agent $T$
derives utility $g(y(T))$ (or ``scrap value"). The aim of each
generation is to maximize the total discounted utility. In this
process, the $t$--generation is not so much concerned with the
consumption of the future generations as it is with respect its
own consumption.

A common specification of $ d_t(s)$ is
\begin{equation}\label{Usualdisc}
 d_t(s)=\exp{\left(-\int_t^s\rho (r)\, dr\right)},
\end{equation}
where $\rho\in L^{\infty}([0,T])$. In this case $ d_t(s)$ is
Lipschitz in $(t,s)$, which is the present value at time $t$ of
one unit of utility at time $s\ge t$. The rate of discount is
$\rho$, and most often it is considered constant. Other popular
discount factors are those that depend only on the elapsed time, $
d_t(s)=\theta (s-t)$ for $s\ge t$, through a scalar function
$\theta$, with $\theta(0)=1$. As will be seen in Section
\ref{s:DPE}, the shape of the discount factor has a major effect
in the structure of the dynamic programming equation.

Let us define $\iota =( \nabla \ell)^{-1}$, the inverse of $\nabla
\ell$. Notice that by A1, both $\nabla \ell$ and $\iota$ are
continuous, and suprajective. We also consider
$\ell^*(p)=\sup_{u\in \R^n}\{p\cdot u-\ell(u)\}$, the Legendre
transform of $\ell$. Finally, let the $t$--Hamiltonian
\[
H_t(s,u,p) =  p\cdot u -  d_t(s)\ell (u).
\]
Throughout the paper, $\nabla $ denotes the gradient of a real
function, and $\nabla^2$ the Hessian Matrix. For a vector
function, $\nabla$ denotes the Jacobian matrix.

\section{Hopf--Lax formula}\label{s:Hopf}
A Hopf--Lax formula describes an infinite dimensional variational
problem as a finite dimensional one. In the present case, the
formula is a bit more involved than in the non--discounted case,
due to the non--autonomous term $ d_t(s)$. Notice also that the
problem at hand is different from the one with a non--autonomous
$\ell(s,\dot y(s))$, because the current date $t$ enters into the
definition.

Given $t\in [0,T]$, $t\le s\le T$, $x,\alpha\in \R^n$, consider
\begin{align*}
U_{t,\alpha}(s)&=\iota( d_t^{-1}(s)\nabla \ell(\alpha)),
\qquad ( d_t^{-1}=1/ d_t)\\
Y_{t,x,\alpha}(s)&=x+\int_t^s
\iota( d_t^{-1}(r)\nabla \ell(\alpha))\, dr,\\
V(x,t,\alpha)&=\int_t^T  d_t(s) \ell (U_{t,\alpha}(s))\, ds +
d_t(T) g\big(Y_{t,x,\alpha}(s)\big).
\end{align*}
Notice that $Y_{t,x,\alpha}(s)$ is absolutely continuous and
$Y_{x,t,\alpha}(t)=x$, thus it is an admissible arc, i.e. it
belongs to ${\AC}_{x,t}$. Observe also that $\dot
Y_{x,t,\alpha}(s) = U_{t,\alpha}(s)$.

We establish the following lemma to facilitate posterior
quotation. It is a consequence of assumption A1.
\begin{lemma}\label{Homeo}
For $x\in \R^n$, $t\in[0,T)$, $s\ge t$, the mappings $\alpha
\mapsto U_{t,\alpha}(s)$, $\alpha \mapsto Y_{t,x,\alpha}(s)$ are
of class $C^1$ and suprajective.
\end{lemma}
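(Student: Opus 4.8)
The plan is to reduce both assertions to properties of $\iota=(\nabla\ell)^{-1}=\nabla\ell^*$ supplied by A1, treating regularity and surjectivity separately. For regularity I would first note that, by A1, $\nabla\ell$ is $C^1$ with positive definite Jacobian $\nabla^2\ell$, so the inverse function theorem gives that $\iota$ is $C^1$ on $\R^n$ with Jacobian $\nabla\iota(p)=[\nabla^2\ell(\iota(p))]^{-1}$. Since for fixed $t,s$ the factor $d_t^{-1}(s)$ is a positive constant, the map $\alpha\mapsto U_{t,\alpha}(s)=\iota(d_t^{-1}(s)\nabla\ell(\alpha))$ is a composition of $C^1$ maps, hence $C^1$, with Jacobian $d_t^{-1}(s)[\nabla^2\ell(U_{t,\alpha}(s))]^{-1}\nabla^2\ell(\alpha)$. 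For $\alpha\mapsto Y_{t,x,\alpha}(s)$ I would differentiate under the integral sign: the integrand $r\mapsto\iota(d_t^{-1}(r)\nabla\ell(\alpha))$ and its $\alpha$-Jacobian are jointly continuous in $(r,\alpha)$, hence bounded uniformly for $r$ in the compact interval $[t,s]$, so Leibniz's rule yields that $Y$ is $C^1$ in $\alpha$.

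Surjectivity of $\alpha\mapsto U_{t,\alpha}(s)$ is immediate: it is the composition $\iota\circ(d_t^{-1}(s)\,\cdot)\circ\nabla\ell$ of three surjections, since $\nabla\ell$ and $\iota$ are surjective by A1 and multiplication by the positive scalar $d_t^{-1}(s)$ is a bijection of $\R^n$ (at $s=t$ it is in fact the identity, as $d_t(t)=1$). The substantive part is surjectivity of $\alpha\mapsto Y_{t,x,\alpha}(s)$, which I would prove for $s>t$ (for $s=t$ the map is the constant $x$, and the claim is to be read for $s>t$). Because $\nabla\ell$ is a bijection and $Y_{t,x,\alpha}(s)=x+\Phi(\nabla\ell(\alpha))$ with $\Phi(\beta):=\int_t^s\iota(d_t^{-1}(r)\beta)\,dr$, it suffices to show that $\Phi:\R^n\to\R^n$ is surjective.

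Here the key observation is $\iota=\nabla\ell^*$, so that $\Phi=\nabla\Psi$ for the convex potential $\Psi(\beta):=\int_t^s d_t(r)\,\ell^*(d_t^{-1}(r)\beta)\,dr$. Since $\ell$ is finite on all of $\R^n$, its conjugate $\ell^*$ is superlinear; combined with the bounds $d_t\ge a>0$ and $d_t^{-1}\ge1$ from A3 and with $s>t$, this forces $\Psi$ itself to be superlinear. Consequently, for any $w\in\R^n$ the convex function $\beta\mapsto\Psi(\beta)-w\cdot\beta$ is coercive and attains a global minimum at some $\beta^*$, where $\nabla\Psi(\beta^*)=\Phi(\beta^*)=w$; hence $\Phi$, and therefore $\alpha\mapsto Y_{t,x,\alpha}(s)$, is surjective.

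The point I expect to require most care is the reading of A1's strict convexity as positive definiteness of $\nabla^2\ell$, which is what the inverse function theorem needs for the $C^1$ claim on $\iota$ (strict convexity alone, e.g.\ $\ell(u)=u^4$ in one dimension, would let $\iota$ fail to be $C^1$). The remaining delicate estimate is the superlinearity of $\Psi$, where the uniform lower bound on the discount factor from A3 and the hypothesis $s>t$ are precisely what drive the coercivity in the minimization argument; the surjectivity of $U$ and the differentiation under the integral are then routine.
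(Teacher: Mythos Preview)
Your argument is correct and considerably more detailed than the paper's: the paper does not actually prove the lemma but simply records, in a single prefatory sentence, that ``it is a consequence of assumption A1.'' There is thus no proof in the paper to compare against; you have supplied one. Your treatment of the surjectivity of $\alpha\mapsto Y_{t,x,\alpha}(s)$ via the superlinear convex potential $\Psi(\beta)=\int_t^s d_t(r)\,\ell^*(d_t^{-1}(r)\beta)\,dr$ and the coercivity of $\beta\mapsto\Psi(\beta)-w\cdot\beta$ is a clean way to handle the only substantive part of the statement, and your observation that the case $s=t$ must be excluded (since $Y_{t,x,\alpha}(t)\equiv x$) is well taken; in the paper the lemma is only ever invoked with $s=T>t$.

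The caveat you flag about A1 is genuine and worth recording: strict convexity together with $C^2$ does not force $\nabla^2\ell$ to be nonsingular, and without that the inverse function theorem does not give $\iota\in C^1$, so the $C^1$ claim for $U$ (and hence for $Y$) is not fully justified by A1 alone. The paper itself implicitly concedes this by adding ``$\nabla^2\ell(u)$ is definite positive for every $u\in\R^n$'' as a separate hypothesis in Theorem~\ref{DPuseful}; strictly speaking that hypothesis is already needed here. Your $u^4$ example is apt, though it also shows that the particular composition $\iota\circ(c\,\cdot)\circ\nabla\ell$ can happen to stay smooth even when $\iota$ does not; absent the positive-definiteness assumption, however, one would need a separate argument to secure this in general.
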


\begin{theorem} {\rm(Hopf--Lax formula with discount).}
If $x\in \R^n$ and $0\le t < T$, then the value function
$v=v(x,t)$ of the minimization problem \eqref{Problem} is given by
\begin{equation}\label{Hopf}
v(x,t)=\min_{p\in \R^n} \left\{\int_t^T  d_t(s) \ell (\iota(
d_t^{-1}(s)p))\, ds +  d_t(T) g\bigg(x+\int_t^T \iota(
d_t^{-1}(s)p)\, ds\bigg)\right\}.
\end{equation}

\end{theorem}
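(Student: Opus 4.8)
The plan is to establish \eqref{Hopf} through a pair of opposite inequalities, after first rewriting its right-hand side in the notation already introduced. The substitution $p=\nabla\ell(\alpha)$ turns $\iota(d_t^{-1}(s)p)$ into $U_{t,\alpha}(s)$, and since A1 makes $\nabla\ell$ a homeomorphism of $\R^n$, minimizing over $p$ is the same as minimizing over $\alpha$; thus the right-hand side of \eqref{Hopf} equals $\inf_{\alpha\in\R^n}V(x,t,\alpha)$. The easy inequality $v(x,t)\le\inf_\alpha V(x,t,\alpha)$ is then immediate from the definition \eqref{Problem}: each $Y_{t,x,\alpha}$ belongs to $\AC_{x,t}$ with $\dot Y_{t,x,\alpha}=U_{t,\alpha}$, so the infimum defining $v$ is bounded above by $V(x,t,\alpha)$ for every $\alpha$.

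The substance of the proof is the reverse inequality, and this is the step I expect to be the main obstacle. I would fix an arbitrary arc $y\in\AC_{x,t}$, assuming its cost is finite (otherwise there is nothing to prove), and invoke Lemma \ref{Homeo}: since $\alpha\mapsto Y_{t,x,\alpha}(T)$ is surjective, I may select $\alpha$ with $Y_{t,x,\alpha}(T)=y(T)$, which forces the two terminal terms to coincide. The key is then the tangent-line inequality for the convex $\ell$ at the point $U_{t,\alpha}(s)$,
\[
\ell(\dot y(s))\ge \ell(U_{t,\alpha}(s))+\nabla\ell(U_{t,\alpha}(s))\cdot\big(\dot y(s)-U_{t,\alpha}(s)\big),
\]
combined with the defining identity $d_t(s)\,\nabla\ell(U_{t,\alpha}(s))=\nabla\ell(\alpha)$, which expresses that $U_{t,\alpha}$ carries constant momentum and hence is the extremal of the Lagrangian $d_t(s)\ell(\cdot)$. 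Multiplying the inequality by $d_t(s)>0$ and integrating, the linear term telescopes:
\[
\int_t^T \nabla\ell(\alpha)\cdot\big(\dot y(s)-U_{t,\alpha}(s)\big)\,ds=\nabla\ell(\alpha)\cdot\big(y(T)-Y_{t,x,\alpha}(T)\big)=0,
\]
where I used $y(t)=Y_{t,x,\alpha}(t)=x$ together with the endpoint match. Hence $\int_t^T d_t(s)\ell(\dot y(s))\,ds\ge\int_t^T d_t(s)\ell(U_{t,\alpha}(s))\,ds$, and adding the (equal) terminal terms shows that the cost of $y$ dominates $V(x,t,\alpha)\ge\inf_\alpha V(x,t,\alpha)$. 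Taking the infimum over $y$ yields $v(x,t)\ge\inf_\alpha V(x,t,\alpha)$, and together with the first step this proves the equality in \eqref{Hopf}.

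It remains to justify writing $\min$ rather than $\inf$, i.e. that the infimum over $p$ is attained. I would show that $p\mapsto V$ is continuous (by continuity of $\iota$, $\ell$, $g$ and dominated convergence, using that $U_{t,\alpha}$ stays bounded as $p$ ranges over a compact set) and coercive. For coercivity, bound $g$ below through its Lipschitz constant (A2), use $d_t(s)>a>0$ (A3), and exploit the superlinearity of $\ell$ (A1): since $d_t^{-1}(s)\ge1$, letting $|p|\to\infty$ forces $|U_{t,\alpha}(s)|=|\iota(d_t^{-1}(s)p)|\to\infty$ pointwise, so applying $\ell(u)\ge K|u|-C_K$ with $K$ exceeding the linear growth rate contributed by the terminal term makes $V\to\infty$. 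A continuous coercive function on $\R^n$ attains its minimum, so the formula indeed holds with $\min$.
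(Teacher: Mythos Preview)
Your proof is correct and follows essentially the same route as the paper: the easy inequality via admissibility of $Y_{t,x,\alpha}$, the reverse inequality by matching endpoints through Lemma~\ref{Homeo} and exploiting convexity (the paper phrases your tangent-line inequality plus the identity $d_t(s)\nabla\ell(U_{t,\alpha}(s))=\nabla\ell(\alpha)$ as concavity of the Hamiltonian $H_t(s,u,\nabla\ell(\alpha))$ in $u$, which is the same computation), and attainment via continuity and coercivity of $V$ in $\alpha$.
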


\begin{proof}
1. For any $\alpha \in \R^n$
\[
v(x,t)\le \int_t^T  d_t(s) \ell (\dot Y_{t,x,\alpha}(s))\, ds +
d_t(T) g(Y_{t,x,\alpha}(T)) = V(x,t,\alpha),
\]
and so
\[
v(x,t)\le \inf_{\alpha\in \R^n} V(x,t,\alpha).
\]

2. On the other hand, for an arbitrary function $y(s)$, $t\le s\le
T$, with $y(t)=x$, let $\overline \alpha$ be such that
\[
Y_{t,x,\overline \alpha}(T) = y(T).
\]
This is possible by Lemma \ref{Homeo}. For each $t$, $s$, $p$, the
Hamiltonian $H_t(\cdot, u,\cdot)$ is concave, thus for any
$\alpha$
\begin{equation}\label{conjugation}
H_t(s,U_{t,\alpha}(s),\nabla \ell(\alpha)) \ge H_t(s,\dot
y(s),\nabla \ell(\alpha)),
\end{equation}
since
\[
\left.\frac {\partial H_t(s,u,\nabla \ell(\alpha))}{\partial
u}\right|_{ u=U_{t,\alpha}(s)} = 0.
\]
%Notice that the above inequality is strict for all $s$ such that
%$\dot y(s) \neq U_{t,\alpha}(s)$ due to the strict convexity of
%$H_t(\cdot, u,\cdot)$.

Let $\alpha = \overline \alpha$ defined above. Integrating
\eqref{conjugation} between $t$ and $T$ and rearranging terms we
get
\begin{align*}
\int_t^T  d_t(s) \ell(U_{t,\overline \alpha}(s)))\, ds &\le \int
_t^T  d_t(s) \ell(\dot y(s))\, ds + \nabla \ell(\overline \alpha)
\int _t^T
(U_{t,\overline \alpha}(s))-\dot y(s))\, ds\\
&=\int _t^T  d_t(s) \ell(\dot y(s))\, ds + \nabla \ell(\overline
\alpha) \int
_t^T (\dot Y_{x,t,\overline \alpha}(s))-\dot y(s))\, ds\\
&= \int _t^T  d_t(s) \ell(\dot y(s))\, ds,
\end{align*}
because $Y_{x,t,\overline \alpha}(t)=x=y(t)$ and $Y_{x,t,\overline
\alpha}(T)=y(T)$. Adding $ d_t(T) g(Y_{x,t,\overline \alpha}(T)) =
d_t(T) g(y(T))$ to both terms of the above inequality we get that,
for any arc $y(s)$, there exist some $\overline \alpha$ such that
\[
V(x,t,\overline \alpha) \le \int_t^T  d_t(s) \ell (\dot y(s))\, ds
+  d_t(T) g(y(T)).
\]
Thus $\inf_ {\alpha\in \R^n} V(x,t,\alpha) \le v(x,t)$. Hence,
$\inf_ {\alpha\in \R^n} V(x,t,\alpha) = v(x,t)$. Finally, observe
that minimization with respect to $\alpha$ is equivalent of
minimization with respect to $p=\nabla \ell(\alpha)$.

3. The infimum is in fact attained, since the function
$V(\cdot,\cdot,\alpha)$ is continuous and inf--compact. Indeed,
$\lim_{|\alpha|\to \infty} |\alpha|^{-1} V(x,t,\alpha) = \infty$
due to the assumptions A1--A3 and Lemma \ref{Homeo}.
\end{proof}

Define $A(x,t) = \argmin_{\alpha\in \R^n} V(x,t,\alpha)$. Since
$\lim _{|\alpha|\to \infty} |\alpha|^{-1} V(x,t,\alpha) = \infty$,
$A$ is compact valued and upper semicontinuous correspondence.

The following corollary is along the lines of the above proof.

\begin{corollary}\label{Optimal}
If $x\in \R^n$ and $0\le t< T$, then for any selection
$\alpha(x,t)\in A(x,t)$, the arc $Y_{x,t,\alpha(x,t)}(s)$ is a
solution of problem \eqref{Problem}.
\end{corollary}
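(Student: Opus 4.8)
The plan is to exploit the fact that $V(x,t,\alpha)$ is literally the cost functional of \eqref{Problem} evaluated along the admissible arc $Y_{t,x,\alpha}$, so that optimality over the finite-dimensional parameter $\alpha$ transfers directly to optimality among arcs. First I would recall from the construction preceding the theorem that $Y_{t,x,\alpha}\in\AC_{x,t}$ with $\dot Y_{t,x,\alpha}(s)=U_{t,\alpha}(s)$, whence
\[
\int_t^T d_t(s)\,\ell(\dot Y_{t,x,\alpha}(s))\, ds + d_t(T)\,g(Y_{t,x,\alpha}(T)) = V(x,t,\alpha).
\]
This is exactly the identity already used in Step 1 of the proof of the theorem, so the corollary is indeed ``along the lines of the above proof''.

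Next I would invoke the Hopf--Lax formula \eqref{Hopf}, which gives $\min_{\alpha\in\R^n} V(x,t,\alpha) = v(x,t)$, the minimum being attained (Step 3 of the theorem's proof shows $V(x,t,\cdot)$ is continuous and inf--compact), so that $A(x,t)=\argmin_{\alpha\in\R^n} V(x,t,\alpha)$ is nonempty. Fixing any selection $\alpha(x,t)\in A(x,t)$, by definition of $A$ we have $V(x,t,\alpha(x,t)) = \min_{\alpha\in\R^n} V(x,t,\alpha) = v(x,t)$.

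Finally I would combine the two facts: the admissible arc $Y_{x,t,\alpha(x,t)}$ has objective value exactly $V(x,t,\alpha(x,t)) = v(x,t)$, and by \eqref{Problem} the latter is the infimum of the objective over all arcs in $\AC_{x,t}$. Hence that infimum is attained at $Y_{x,t,\alpha(x,t)}$, i.e. this arc solves \eqref{Problem}. I do not expect a genuine obstacle here: the whole content is the bookkeeping identity that $V(x,t,\alpha)$ equals the cost of $Y_{t,x,\alpha}$, together with the admissibility of $Y_{t,x,\alpha}$ (guaranteed by the construction and Lemma \ref{Homeo}). The only point to state carefully is that what is asserted is optimality of the selected arc, not its uniqueness.
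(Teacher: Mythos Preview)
Your argument is correct and is precisely the unpacking of the paper's remark that the corollary is ``along the lines of the above proof'': the paper gives no separate proof, and the content is exactly the bookkeeping identity $V(x,t,\alpha)=$ cost of $Y_{t,x,\alpha}$ combined with $v(x,t)=\min_\alpha V(x,t,\alpha)$ from the theorem.
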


%In optimal control theory,
%\[
%\overline u=\overline u(s)=\dot
%Y_{x,t,\alpha(x,t)}(s)=\iota\Big( d_t(s)^{-1}\nabla \ell(\alpha(x,t))\Big),\quad
%t\le s < T
%\]
%is called the open loop optimal control of problem
%\eqref{Problem}. In the open--loop framework the initial
%conditions $x,t$ are constants, and the above expression has a
%definite meaning as a function of calendar, $s$. However, in the
%feedback case, that is, when a feedback rule, $u=u(x,t)$, for the
%optimal control is required, the above expression must be
%considered with care, since the correspondence $A(x,t)$ is in
%general only upper semicontinuous. Hence, the solution to $\dot
%y(s)= u(y(s),s)$ may not exist.

\begin{remark}
When $ d_t(s)=1$ for all $0\le t\le s\le T$, \eqref{Hopf} reduces
to the classical Hopf--Lax formula
\[
v(x,t)=\min_{\alpha \in \R^n} \left\{
(T-t)\ell\left(\frac{\alpha-x}{T-t}\right)+g(\alpha)\right\}.
\]
\end{remark}
%\begin{remark}\label{multi} The formula applies also to the case $x\in \R^n$,
%with only notational changes. Let $\nabla \ell$ the gradient
%vector of $\ell$. As for the scalar case, $\nabla \ell
%:\R^n^n\longrightarrow \R^n$ is continuous and bijective; defining
%$\iota =(\nabla \ell)^{-1}$. Then \eqref{Hopf} in this
%multidimensional setting is
%\[
%v(x,t)=\min_{p\in \R^n^n}\left\{\int_t^T  d_t(s) \ell
%(\iota( d_t^{-1}(s)p))\, ds +  d_t(T) g\Big(x+\int_t^T
%\iota( d_t^{-1}(s)p)\, ds\Big)\right\}.
%\]
%Notices that minimization in $\alpha$ is the same as minimization
%in $p=\nabla \ell(\alpha)$.
%
%\end{remark}

For a locally Lipschitz function $f$, $\lip(f)$ will denote the
Lipschitz parameter of $f$ in a given compact set $K$, and $\bound
(f)$ will denote a bound of $|f|$ in that set. Notice that under
our assumptions $f=\iota,  d,  d^{-1}$ are locally Lipschitz.
\begin{theorem}\label{Lipschitz} {\rm (Lipschitz continuity).} The value function $v$ is
locally Lipschitz continuous in $\R^n\times [0,T]$ and
\[
v=g\quad \mbox{on } \R^n\times \{t=T\}.
\]
\end{theorem}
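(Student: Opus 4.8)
The plan is to establish Lipschitz continuity by exploiting the Hopf--Lax representation \eqref{Hopf} rather than working directly with the infinite--dimensional problem \eqref{Problem}. The value function is a minimum over $p\in\R^n$ of the function $V$ (equivalently over $\alpha$, via $p=\nabla\ell(\alpha)$), and minima of a family of functions inherit the modulus of continuity that holds uniformly on the relevant compact set of minimizers. The terminal condition $v=g$ on $\{t=T\}$ is immediate: when $t=T$ the integral in \eqref{Hopf} is empty, $d_T(T)=1$ by A3, and the spatial argument of $g$ reduces to $x$, so $v(x,T)=g(x)$.

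First I would fix a compact set $K\subset\R^n\times[0,T]$ and argue that the minimizers stay in a bounded set: since $\lim_{|\alpha|\to\infty}|\alpha|^{-1}V(x,t,\alpha)=\infty$ uniformly for $(x,t)$ in a compact set (as noted after the theorem, the correspondence $A$ is compact--valued and upper semicontinuous), there is a radius $R$ so that all relevant minimizers $p$ lie in $\overline{B(0,R)}$. Next I would establish Lipschitz continuity in the $x$--variable: for fixed $t$ and $p$, the map $x\mapsto V$ differs only through the terminal term $d_t(T)g(x+\int_t^T\iota(d_t^{-1}(s)p)\,ds)$, so the Lipschitz constant of $v(\cdot,t)$ is controlled by $d_t(T)\lip(g)\le\lip(g)$, using A2 and $d_t(T)\le 1$. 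This gives a global Lipschitz bound in $x$ of $\lip(g)$.

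The main work is Lipschitz continuity in $t$, and this is where I expect the obstacle to lie. Fixing $x$ and a near--optimal $p$ from $\overline{B(0,R)}$, I would compare $v(x,t)$ and $v(x,t')$ by plugging the minimizer for one time into the expression for the other and estimating the difference. The $t$--dependence enters in three places: the lower limit of the integral, the factor $d_t(s)$ together with its inverse $d_t^{-1}(s)$ appearing inside $\iota$, and the terminal discount $d_t(T)$. Each of these is Lipschitz on the compact set by A3 and the local Lipschitz property of $\iota,d,d^{-1}$ recorded just before the theorem; the integrand $d_t(s)\ell(\iota(d_t^{-1}(s)p))$ is bounded on $K\times\overline{B(0,R)}$ and its $t$--derivative can be controlled through $\lip$ and $\bound$ of these functions, while the moving endpoint contributes a term bounded by the integrand's sup norm times $|t-t'|$. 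The delicate point is that both $t$ and $s$ enter the discount, so one must track the joint Lipschitz dependence carefully; assembling these pieces yields a bound of the form $|v(x,t)-v(x,t')|\le C_K|t-t'|$ with $C_K$ depending only on the compact bounds.

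Combining the $x$-- and $t$--estimates gives local Lipschitz continuity on $\R^n\times[0,T]$. I would state the standard lemma that if $v(x,t)=\min_p F(x,t,p)$ and $F$ is Lipschitz in $(x,t)$ uniformly for $p$ in the compact set containing all minimizers over $K$, then $v$ is Lipschitz on $K$ with the same constant; this reduces the whole argument to the two coordinatewise estimates above together with the uniform boundedness of minimizers.
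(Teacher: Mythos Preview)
Your strategy---exploit the Hopf--Lax representation and bound $|V(\hat x,\hat t,\alpha)-V(x,t,\alpha)|$ for $\alpha$ a minimizer---is exactly the paper's; its steps 1--3 carry out the coordinatewise Lipschitz estimates on $U_{t,\alpha}$, $Y_{t,x,\alpha}$ and the terminal term that you sketch.

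The gap is at the boundary $t=T$. Your justification for confining the minimizers, namely that $|\alpha|^{-1}V(x,t,\alpha)\to\infty$ uniformly on a compact $K\subset\R^n\times[0,T]$, fails once $K$ meets $\{t=T\}$: there $V(x,T,\alpha)=g(x)$ is independent of $\alpha$, so the ratio tends to $0$ and \emph{every} $\alpha$ is a minimizer. Consequently your argument, as written, yields a Lipschitz constant only on compacta bounded away from $t=T$, and the ``immediate'' terminal value $v(x,T)=g(x)$ (which in any case follows from \eqref{Problem} rather than from \eqref{Hopf}, the latter being stated only for $t<T$) does not by itself give Lipschitz continuity up to $t=T$. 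The paper closes this gap with a separate estimate (its step 4): using Jensen's inequality and the superlinearity of $\ell$ it shows $|v(x,t)-d_t(T)g(x)|\le C\,\delta_t$ with $\delta_t=\int_t^Td_t(s)\,ds\le T-t$. You could instead repair your route by proving directly that every minimizer for $t<T$ satisfies $|\nabla\ell(\alpha)|\le\lip(g)$---compare $V(x,t,\alpha)$ with $V(x,t,\iota(0))$ and invoke $\ell(u)/|u|\to\infty$---which supplies the missing uniform bound on $\alpha$ and lets your lemma apply on all of $K$.
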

\begin{proof}
Let $x,\hat x\in K\subseteq \R^n$ with $K$ compact, and $t,\hat
t\in [0,T)$ and $\alpha\in \R^n$.

1. Let $s\in [0,T)$. Let us proceed to establish three Lipschitz
estimates.
\begin{align*}
|U_{\hat t,\alpha}(s)-U_{t,\alpha}(s)|&\le \lip(\iota)\nabla
\ell(\alpha) | d_{\hat t}^{-1} (s)- d_{
t}^{-1} (s)|\\
&\le \lip(\iota)\nabla \ell(\alpha)\lip( d^{-1})|\hat t-t| =
C|\hat t-t|.
\end{align*}
\begin{align*}
|Y_{\hat t,\hat x,\alpha}(s)-Y_{t,x,\alpha}(s)|&\le |\hat x- x| +
\left|\int_{\hat t}^T U_{\hat t,\alpha}(s)\, ds
- \int_t^T U_{t,\alpha}(s)|\, ds\right|\\
&\le  |\hat x- x| + \int_{\hat t \vee t}^T |U_{\hat
t,\alpha}(s)-U_{t,\alpha}(s)| \, ds + \int_{\hat t \wedge t}^{\hat
t
\vee t} |U_{\hat t \wedge t,\alpha}(s)|\, ds\\
&\le |\hat x- x| + TC |\hat t- t| + \bound(U) |\hat t- t|\\
&=|\hat x- x| + C |\hat t- t|.
\end{align*}
\begin{equation}\label{lipschitz}
\begin{aligned}
| d_{\hat t} (T) g(Y_{\hat t,\hat x,\alpha}(T))- d_{ t} (T)
g(Y_{t, x,\alpha}(T))| &\le | d_{\hat t} (T) - d_{
t} (T)||g(Y_{\hat t,\hat x,\alpha}(T)|\\
&\quad + d_{ t} (T)|g(Y_{\hat
t,\hat x,\alpha}(T))-g(Y_{t, x,\alpha}(T))|\\
& \le \lip( d)\bound(g) |\hat
t-t|\\
&\quad +\lip(g)\bound( d)\left(|\hat x- x|+C |\hat t-t|\right)\\
&=C(|\hat x- x|+ |\hat t-t|).
\end{aligned}
\end{equation}
In the above, we have used the same $C$ to denote several
constants.

3. Choose $\alpha\in A(x,t)$. Then, by definition of $v$ and
estimate \eqref{lipschitz}
\begin{align*}
v(x,t)-v(\hat x,\hat t) &\le V(\hat x,\hat t,\alpha)-V(x,t,\alpha)\\
&= d_{\hat t} (T) g(Y_{\hat t,\hat x,\alpha})- d_{ t} (T)
g(Y_{t,x,\alpha})\\
&\le C(|\hat x- x|+ |\hat t-t|).
\end{align*}
Reversing the role of $(\hat x, \hat t)$ and $(x,t)$ we get the
desired Lipschitz property.

4. Now, let $x\in \R^n$, $t<T$ and define $\delta_t=\int_t^T  d
_t(s) \, ds$. Choose $\alpha\in \R^n$ such that $\int_t^T
U_{t,\alpha}(s)\, ds = 0$; this is possible by virtue of Lemma
\ref{Homeo}. Let $b=\max_{s\in [t,T]} |U_{t,\alpha}(s)|$, and let
$\bound(\ell)$ be a bound of $|\ell|$ in $[-b,b]$. Then,
\begin{equation}\label{Ineq1}
v(x,t)\le \int_t^T  d _t(s) \ell (U_{t,\alpha}(s))\, ds +
 d_t(T)g(x)\le \bound (\ell) \delta_t +  d_t(T)g(x).
\end{equation}
Moreover,
\begin{align*}
v(x,t)&\ge  d_t(T) g(x) + \min_{\alpha \in \R^n} \left\{ -\lip(g)
\Big|\int_t^T U_{t,\alpha}(s)\, ds\Big| + \int_t^{T}
 d_t(s) \ell (U_{t,\alpha}(s))\, ds \right\}\\
&\ge  d_t(T) g(x) + \delta_t \min_{\alpha \in \R^n} \left\{
-\lip(g)\delta_t^{-1} \Big|\int_t^T U_{t,\alpha}(s)\, ds\Big| +
\ell\left(\delta_t ^{-1}\int_t^{T}  d_t(s) U_{t,\alpha} (s)\,
ds\right)\right\},
\end{align*}
by Jensen's inequality. Now, notice that for any $\alpha\in \R^n$
\[
\frac{\int_t^T U_{t,\alpha}(s)\, ds}{\int_t^T  d_t(s)
U_{t,\alpha}(s)\, ds}\rightarrow 1\qquad \mbox{as }t\to T^-
\qquad\mbox{(componentwise)}
\]
thus, for every $t$ close enough to $T$, there exists $\epsilon>0$
such that
\begin{align*}
v(x,t)&\ge  d_t(T) g(x) + \delta_t \min_{\alpha \in \R^n}
\left\{-\lip(g)(1+\epsilon)\delta_t^{-1}\Big|\int_t^T  d_t(s)
U_{t,\alpha}(s)\, ds\Big|\right.\\
&\hspace{5cm} + \left. \ell\left(\delta_t ^{-1}\int_t^{T}
 d_t(s) U_{t,\alpha} (s)\,
ds\right)\right\}\\
& =  d_t(T) g(x) - \delta_t \max_{z\in B}\max_{\alpha \in \R^n}
\left\{z\delta_t^{-1}\int_t^T  d_t(s) U_{t,\alpha}(s)\, ds\right.\\
&\hspace{5cm} - \left. \ell\left(\delta_t ^{-1}\int_t^{T}
 d_t(s) U_{t,\alpha} (s)\, ds\right)\right\},
\end{align*}
where $B=[-\lip(\ell)(1+\epsilon),\lip(\ell)(1+\epsilon)]$. Then
\begin{equation}\label{Ineq2}
v(x,t)\ge d_t(T) g(x) -
\delta_t\max_{z\in[-\lip(\ell)(1+\epsilon),\lip(\ell)(1+\epsilon)]}\ell^*(z),
\end{equation}
since $\alpha\longrightarrow \int_t^T  d_t(s) U_{t,\alpha}(s)\,
ds$ is suprajective. Thus, by \eqref{Ineq1} and \eqref{Ineq2}
\[
|v(x,t)- d_t(T)g(x)|\le C\delta_t
\]
for an appropriated constant $C$. Given that $ d_t(T)$ tends to 1
and $\delta_t$ tends to 0 as $t\to T$, we are done.
\end{proof}

\section{Dynamic programming equation}\label{s:DPE}
For any $y\in \AC_{x,t}$ and $t\le \tau \le T$, let $Y_{\tau}$
denote an optimal arc from initial condition $(y(\tau),\tau)$,
that is,
\[
Y_{\tau}(s)=Y_{y(\tau), \tau, \alpha(y(\tau),\tau)}(s),
\]
which exists by Corollary \ref{Optimal}.

Consider for $ t < \tau \le T $ the function
\[
W(x,t,\tau) = \int_{\tau}^T( d_t(s)- d_{\tau}(s))
\ell(\dot{Y}_{\tau}(s))\, ds + ( d_t(T)-
d_{\tau}(T))g(Y_{\tau}(T))
\]
\begin{lemma}\label{l:DPhalf} For every initial
condition $(x,t)$, admissible arc $y\in {\AC}_{x,t}$ and $t\le
\tau\le T$, we have
\begin{equation}\label{DPhalf}
v(x,t)\le \int_t^{\tau}  d_t(s) \ell (\dot y(s))\, ds +
v(y(\tau),\tau)
 + W(x,t,\tau).
\end{equation}
\end{lemma}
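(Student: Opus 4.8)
The plan is to establish this ``easy half'' of the dynamic programming principle by exhibiting an explicit admissible competitor for the minimization problem \eqref{Problem} defining $v(x,t)$, and then comparing its cost against the right-hand side of \eqref{DPhalf}. First I would form the concatenated arc
\[
z(s)=\begin{cases} y(s), & t\le s\le \tau,\\ Y_\tau(s), & \tau\le s\le T,\end{cases}
\]
which follows the given admissible arc $y$ up to the intermediate time $\tau$ and then switches to the arc $Y_\tau$ that is optimal from the initial condition $(y(\tau),\tau)$. Since $Y_\tau(\tau)=y(\tau)$ by construction, the two pieces agree at $s=\tau$, so $z$ is absolutely continuous on $[t,T]$; moreover $z(t)=y(t)=x$, whence $z\in \AC_{x,t}$ and $z$ is admissible in \eqref{Problem}.

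Next I would use $z$ as a test arc in the infimum defining $v(x,t)$ and split the integral at $\tau$, obtaining
\[
v(x,t)\le \int_t^\tau d_t(s)\ell(\dot y(s))\, ds + \int_\tau^T d_t(s)\ell(\dot Y_\tau(s))\, ds + d_t(T) g(Y_\tau(T)).
\]
The first term already matches the right-hand side of \eqref{DPhalf}. For the remaining two terms, which are discounted with $d_t$, I would invoke the optimality of $Y_\tau$ from Corollary \ref{Optimal}, namely
\[
v(y(\tau),\tau)=\int_\tau^T d_\tau(s)\ell(\dot Y_\tau(s))\, ds + d_\tau(T) g(Y_\tau(T)),
\]
which is discounted instead with $d_\tau$. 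Adding and subtracting this $d_\tau$-discounted cost, the difference of the two discount factors over $[\tau,T]$ collects exactly into $W(x,t,\tau)$; that is, $\int_\tau^T d_t(s)\ell(\dot Y_\tau(s))\, ds + d_t(T) g(Y_\tau(T)) = v(y(\tau),\tau)+W(x,t,\tau)$. Substituting back then gives \eqref{DPhalf}.

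I do not expect a serious obstacle, since this is the inequality direction that follows directly from $v$ being an infimum over admissible arcs. The only point demanding care --- and the feature that distinguishes this statement from the classical time-consistent dynamic programming principle --- is the discount mismatch on $[\tau,T]$: the tail of the competitor is discounted by the ``generation $t$'' factor $d_t$, whereas the optimal continuation $Y_\tau$ is optimal for the ``generation $\tau$'' factor $d_\tau$. The correction term $W(x,t,\tau)$ is precisely the bookkeeping device that absorbs this discrepancy, and confirming that it does so is exactly the add-and-subtract computation described above.
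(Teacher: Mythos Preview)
Your proposal is correct and is essentially the paper's own argument: concatenate $y$ on $[t,\tau]$ with the optimal continuation $Y_\tau$ on $[\tau,T]$, use this as a competitor for $v(x,t)$, and add and subtract the $d_\tau$-discounted tail so that the mismatch collects into $W(x,t,\tau)$. The paper additionally treats the endpoint $\tau=T$ separately (where $Y_\tau$ degenerates and Corollary~\ref{Optimal} does not directly apply), but this case is immediate from the definition of $v$.
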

\begin{proof}
Let $y\in \AC_{x,t}$ be fixed but arbitrary. If $\tau=T$, then
$y(T)=Y_T(T)$ and $v(y(T),T)= d_T(T) g(y(T))=g(Y_T(T))$. Then
\eqref{DPhalf} reduces to $v(x,t)\le \int_t^{\tau}  d_t(s) \ell
(\dot y(s))\, ds +  d_t(T) g(y(T))$, which is true by the
definition of $v$. Now, suppose $\tau<T$. Let
$\alpha(y(\tau),\tau)\in A(y(\tau),\tau)$. Then, by Corollary
\ref{Optimal}
\[
\int_{\tau}^T  d_{\tau}(s) \ell (\dot {Y}_{\tau}(s)) +
 d_{\tau}(T) g(Y_{\tau}(T)) = v(y(\tau),\tau).
\]
Let us define the admissible arc $\tilde y\in {\AC}_{x,t}$ by
\[
\tilde y(s)=\left\{%
\begin{array}{ll}
    y(s), & \hbox{if $t\le s\le \tau$;} \\
    Y_{\tau}(s), & \hbox{if $\tau<s \le T$.} \\
\end{array}%
\right.
\]
We have
\begin{align*}
v(x,t)&\le \int_t^T  d_t(s)\ell(\dot {\tilde y}(s))\, ds +
 d_t(T) g(\tilde y(T))\\
&=\int_t^{\tau}  d_t(s)\ell(\dot y(s))\, ds + \int_{\tau}^T
 d_{\tau}(s) \ell(\dot{Y}_{\tau}(s))\, ds +  d_{\tau}
(T) g(Y_{\tau}(T))\\
&\quad  + \int_{\tau}^T ( d_t(s)-
d_{\tau}(s))\ell(\dot{Y}_{\tau}(s))\, ds +
 ( d_t(T)-  d_{\tau}(T)) g(Y_{\tau}(T))\\
& =\int_t^{\tau}  d_t(s)\ell(\dot y(s))\, ds  +
v(y(\tau),\tau)\\
&\quad + \int_{\tau}^T( d_t(s)- d_{\tau}(s))
\ell(\dot{Y}_{\tau}(s))\, ds + ( d_t(T)-  d_{\tau}(T))
g(Y_{\tau}(T)).
\end{align*}
\end{proof}

\begin{corollary}\label{C:DP} {\rm (Dynamic Programming).} For every initial
$(x,t)$ and $t\le \tau\le T$
\begin{equation}
\begin{aligned}
v(x,t)&=\min_{y\in \AC_{x,t}}\left\{\int_t^{\tau}  d_t(s) \ell
(\dot y(s))\, ds +  v(y(\tau),\tau)  \right\} + W(x,t,\tau).
\end{aligned}
\end{equation}
\end{corollary}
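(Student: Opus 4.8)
The plan is to derive the identity from two inequalities: the ``$\le$'' part is immediate from Lemma \ref{l:DPhalf}, while the reverse is obtained by testing the right-hand side against a globally optimal arc furnished by Corollary \ref{Optimal}. One point to keep in mind is that the term $W(x,t,\tau)$ depends on the arc through the value $y(\tau)$ (via the optimal sub-arc $Y_\tau$ issuing from $(y(\tau),\tau)$); accordingly I read the statement with $W$ evaluated along the arc that realizes the minimum, and the argument will show that both inequalities are attained at one and the same arc $y^*$.

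For ``$\le$'', Lemma \ref{l:DPhalf} gives, for every $y\in\AC_{x,t}$,
\[
v(x,t)\le \int_t^\tau d_t(s)\ell(\dot y(s))\,ds + v(y(\tau),\tau) + W(x,t,\tau),
\]
where $W(x,t,\tau)$ depends on $y$ only through $y(\tau)$. Hence $v(x,t)$ is bounded above by the infimum of the right-hand side over $y\in\AC_{x,t}$, and it remains to produce an admissible arc at which this bound is attained.

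For the reverse inequality I would take a globally optimal arc $y^*$ for \eqref{Problem} at $(x,t)$ (it exists by Corollary \ref{Optimal}) and split its cost at $\tau$:
\[
v(x,t)=\int_t^\tau d_t(s)\ell(\dot y^*(s))\,ds + \int_\tau^T d_t(s)\ell(\dot y^*(s))\,ds + d_t(T)g(y^*(T)).
\]
The decisive step is to identify the restriction of $y^*$ to $[\tau,T]$ with the optimal sub-arc $Y_\tau$ from $(y^*(\tau),\tau)$, i.e.\ the tail-optimality (Bellman) property of the minimizer. Granting this, $v(y^*(\tau),\tau)$ equals the $d_\tau$--discounted cost of that tail, and, comparing with the definition of $W$, the sum $v(y^*(\tau),\tau)+W(x,t,\tau)$ equals precisely the $d_t$--discounted tail cost $\int_\tau^T d_t(s)\ell(\dot y^*(s))\,ds + d_t(T)g(y^*(T))$. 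The displayed decomposition then reads $v(x,t)=\int_t^\tau d_t(s)\ell(\dot y^*(s))\,ds + v(y^*(\tau),\tau)+W(x,t,\tau)$, so that $y^*$ attains the infimum of the previous step; this both upgrades that infimum to a minimum and supplies the matching ``$\ge$''.

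The main obstacle is exactly this tail-optimality step: showing that the restriction of the global minimizer solves the sub-problem from $(y^*(\tau),\tau)$, so that Lemma \ref{l:DPhalf} holds with equality along $y^*$. This is where the precise role of $W$ enters: it is the bookkeeping term that converts the sub-problem's intrinsic $d_\tau$--discount into the $d_t$--discount inherited from the original horizon at $t$, and it is the reason the naive dynamic programming identity (without $W$) fails under non--constant discount. Everything else — the splitting of the integral, the comparison with the definition of $W$, and passing to the minimum — I expect to be routine.
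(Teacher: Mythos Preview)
Your plan coincides with the paper's: the ``$\le$'' direction is Lemma~\ref{l:DPhalf}, and the reverse is obtained by plugging in an optimal arc supplied by Corollary~\ref{Optimal}. The paper's entire proof is the single assertion that \eqref{DPhalf} becomes an equality along such an optimal arc --- which is exactly the tail-optimality step you isolate as the ``main obstacle'' --- so your proposal is already as explicit as (indeed, more explicit than) the paper's own argument.
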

\begin{proof}
In fact \eqref{DPhalf} is an equality since an optimal arc is
attained for every initial condition $x,t$, by Corollary
\ref{Optimal}.
\end{proof}

Now consider the function
\begin{equation}\label{w}
w(x,t,\alpha)=-\int _t^T \frac{\partial d_t}{\partial t}(s) \,\ell
(U_{t,\alpha}(s))\, ds - \frac{\partial d_t}{\partial t}(T)\,
g(Y_{x,t,\alpha}(T)).
\end{equation}
The (generalized) dynamic programming equation is as follows. It
could be obtained for a more general optimal control problem with
some additional assumptions.
\begin{theorem}\label{th:DynProg} {\rm (Dynamic Programming Equation).}
Suppose that for every $t\le s\le T$, $ d_t(s)$,
$(\partial/\partial t)  d_t(s)$ are continuous in $t$ and summable
in $s$. Let $(x,t)$ be a point at which the value function $v$ is
differentiable. Then:
\begin{equation}\label{Th:GenHJ}
-v_t(x,t)+\ell ^*(-v_x(x,t))+w(x,t,\alpha(x,t)) = 0, \mbox{ in }
\R^n\times (0,T).
\end{equation}
\end{theorem}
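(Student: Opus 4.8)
The plan is to obtain \eqref{Th:GenHJ} directly from the dynamic programming relation of Corollary \ref{C:DP}, specialized to the short horizon $\tau=t+h$ with $h>0$ small, by rearranging, dividing through by $h$, and letting $h\to 0^+$. Since $v$ is assumed differentiable at $(x,t)$, both the first-order expansion of $v$ along admissible arcs and the incremental behaviour of the correction term $W(x,t,t+h)$ are controlled, and the convex-conjugate structure of the running cost will turn two one-sided estimates into the stated equality. I would establish the ``$\le$'' direction from the inequality in Lemma \ref{l:DPhalf} using a linear test arc, and the ``$\ge$'' direction from the equality in Corollary \ref{C:DP} using the optimal arc supplied by Corollary \ref{Optimal}.

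The first key step is the limit
\[
\lim_{h\to 0^+}\frac{W(x,t,t+h)}{h}=w(x,t,\alpha(x,t)),
\]
with $w$ as in \eqref{w}. To see this I would note that $W(x,t,t)=0$ and differentiate $\tau\mapsto W(x,t,\tau)$ at $\tau=t$: the boundary contribution at $s=\tau$ cancels because $d_\tau(\tau)=1$ forces $d_t(\tau)-d_\tau(\tau)=0$ there (assumption A3), while each term carrying the factor $d_t(s)-d_\tau(s)$ vanishes at $\tau=t$ for the same reason, leaving only $-\,\partial_\tau d_\tau(s)=-\partial_t d_t(s)$ paired against $\ell(\dot Y_\tau(s))$ and $g(Y_\tau(T))$. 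Passing the limit inside the integral is precisely where the hypotheses of continuity in $t$ and summability in $s$ of $d_t(s)$ and $(\partial/\partial t)d_t(s)$ are used, via dominated convergence; the optimal continuation arcs $Y_{t+h}$ converge to the optimal arc $Y_t=Y_{x,t,\alpha(x,t)}$ issued from $(x,t)$.

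For one direction I would take the linear arc $y(s)=x+(s-t)\xi$ in \eqref{DPhalf}, so that $\dot y\equiv\xi$ and $y(t+h)=x+h\xi$; rearranging, dividing by $h$ and letting $h\to 0^+$ gives, using $d_t(t)=1$ and differentiability of $v$,
\[
-v_x(x,t)\cdot\xi - v_t(x,t)\le \ell(\xi)+w(x,t,\alpha(x,t)),
\]
and taking the supremum over $\xi\in\R^n$ produces a bound for $\ell^*(-v_x)$ after recalling $\ell^*(p)=\sup_u\{p\cdot u-\ell(u)\}$. For the reverse inequality I would instead run the optimal arc $Y_t$, for which Corollary \ref{C:DP} holds with equality; since $\dot Y_t(t)=U_{t,\alpha(x,t)}(t)=\alpha(x,t)$ (because $d_t(t)=1$ and $\iota=(\nabla\ell)^{-1}$), the same limiting procedure yields $-v_x\cdot\alpha(x,t)-\ell(\alpha(x,t))=v_t+w$, and the elementary estimate $-v_x\cdot\alpha(x,t)-\ell(\alpha(x,t))\le \ell^*(-v_x)$ closes the argument from the other side. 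Combining the two one-sided estimates yields the dynamic programming equation \eqref{Th:GenHJ}.

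The main obstacle I anticipate is the rigorous justification of the limit of $W(x,t,t+h)/h$, and in particular the convergence $Y_{t+h}\to Y_t$ of the optimal continuation arcs. This rests on the compact-valuedness and upper semicontinuity of the correspondence $A$ together with the $C^1$-dependence from Lemma \ref{Homeo}; at a point where $v$ is differentiable one expects the relevant optimal quantities to be determined, so that $\ell(\dot Y_{t+h}(s))\to\ell(\dot Y_t(s))$ and $g(Y_{t+h}(T))\to g(Y_t(T))$ under an integrable dominating bound, letting dominated convergence apply. The remaining steps — the first-order expansions of $v$ and the passage from the affine inequality to the Legendre transform — are routine once this limit is secured.
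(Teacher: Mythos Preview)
Your proposal is correct and follows essentially the same approach as the paper: both arguments specialize the dynamic programming relation (Lemma~\ref{l:DPhalf}/Corollary~\ref{C:DP}) to $\tau=t+h$, identify $\lim_{h\to 0^+}h^{-1}W(x,t,t+h)=w(x,t,\alpha(x,t))$ via the upper semicontinuity of $A$ and the consequent convergence $Y_{t+h}\to Y_t$, obtain the ``$\le$'' inequality from a generic (you: linear) test arc, and close the equality by running the optimal arc $Y_t$. Your treatment is slightly more explicit---you spell out the dominated-convergence step and the identity $\dot Y_t(t)=\alpha(x,t)$---but the structure is the same as the paper's.
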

\begin{proof}
1. By Lemma \ref{l:DPhalf}, if $t+h<T$
\begin{equation}\label{Th:DPhalf}
\begin{aligned}
v(x,t)-v(y(t+h),t+h))&\le \int_t^{t+h}  d_t(s) \ell (\dot
y(s))\, ds\\
&\quad + \int_{t+h}^T( d_t(s)- d_{t+h}(s))
\ell(\dot{Y}_{t+h}(s))\, ds\\
&\quad  + ( d_t(T)-  d_{t+h}(T)) g(Y_{t+h}(T)),
\end{aligned}
\end{equation}
for any $y\in {\AC}_{x,t}$.

2. The correspondence $A$ is compact valued and upper
semicontinuous, hence we can assume $\lim_{h\to 0^+} \alpha
(y(t+h),t+h)\in A(x,t)$; we denote the limit by $\alpha (x,t)$. By
continuity $\lim_{h\to 0^+} Y_{t+h}(s)=Y_t(s)$, and $\lim_{h\to
0^+}\dot Y_{t+h}(s) = \dot Y_{t}(s)$. Then,
\[
\lim_{h\to 0^+} h^{-1} ( d_t(T)-  d_{t+h}(T))
g(Y_{t+h}(T))=-\frac{\partial  d_t}{\partial t}(T) g(Y_{t}(T)),
\]
and
\[
\lim_{h\to 0^+} h^{-1} \int_{t+h}^T( d_t(s)- d_{t+h}(s)) \ell(\dot
{Y}_{t+h}(s))\, ds = -\int_t^T  \frac{\partial
 d_t}{\partial t}(s) \,\ell (\dot {Y}_{t}(s))\, ds.
\]

3. Taking limits in \eqref{Th:DPhalf}
\begin{align*}
\lim_{h\to 0^+} h^{-1}\big(v(x,t)-v(y(t+h),t+h)\big) &\le
\lim_{h\to 0^+} h^{-1}\int_t^{t+h}  d_t(s) \ell (\dot y(s))\,
ds\\
&  - \lim_{h\to 0} h^{-1} W(x,t,t+h)
\end{align*}
for every $y\in {\AC}_{x,t}$. This yields
\[
-v_t(x,t)-v_x(x,t)\cdot u - \ell(u) + w(x,t,\alpha (x,t)) \le 0
\]
for every $u\in \R^n$. Recalling the definition of $\ell^*$, this
is equivalent to
\[
-v_t(x,t)+\ell^*(-v_x(x,t)) + w(x,t,\alpha(x,t)) \le 0.
\]
4. To prove the equality, we use the same argument. Notice that
equality holds in \eqref{Th:DPhalf} for $y(s)=Y_t(s)$.
\end{proof}

\begin{remark}
If $\frac{\partial  d_t}{\partial t}(s) = \rho (t)
 d_t(s)$ for some continuous function $\rho$,
then equation \eqref{w} gives $w(x,t,\alpha(x,t))=\rho (t) v(x,t)$
hence, \eqref{Th:GenHJ} takes the form of a Hamilton--Jacobi
equation with a dissipation term
\[
-v_t(x,t)+\ell ^*(-v_x(x,t))+\rho(t) v(x,t) = 0, \mbox{ in
}\R^n\times (0,T).
\]
This happens if and only if \eqref{Usualdisc} holds, since we are
assuming $d_t(t)=1$ for each $t$.

%Thus, we have showed that \eqref{Hopf} constitutes a generalized
%solution for the Cauchy problem \eqref{Diss}. Notice that $f$
%convex and smooth implies $\ell$ strictly convex.
\end{remark}

In the general case, the dynamic programming equation
\eqref{Th:GenHJ} has a complicated structure. Indeed, the optimal
arc itself enters the formulation as a non--local term, thus the
applicability of the equation should be taken with caution. In
contrast, the solution given in \eqref{Hopf} is simpler. This
stresses the usefulness of having a Hopf--Lax formula at hand.
Nevertheless, we can give a more amenable form to the dynamic
programming equation, close to classical standards, when assuming
that both the value function and function $g$ are differentiable.
This is the content of the next theorem.
\begin{theorem}\label{DPuseful}
With the same assumptions as in Theorem \ref{th:DynProg}, assume
further that $\nabla^2 \ell(u)$ is definite positive for every
$u\in \R^n$ and that $g$ is differentiable; then, the dynamic
programming equation \eqref{Th:GenHJ} is
\begin{equation}\label{DPgood}
-v_t(x,t)+\ell ^*(-v_x(x,t)) + w(x,t,\iota(-v_x(x,t))) = 0, \mbox{
in }\R^n\times (0,T).
\end{equation}

\end{theorem}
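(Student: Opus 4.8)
The plan is to reduce \eqref{Th:GenHJ} to \eqref{DPgood} by identifying the optimal parameter: I claim that at every point $(x,t)$ where $v$ is differentiable, and for \emph{every} selection $\alpha(x,t)\in A(x,t)$, one has
\[
\nabla\ell(\alpha(x,t))=-v_x(x,t),\qquad\text{equivalently}\qquad \alpha(x,t)=\iota(-v_x(x,t)).
\]
Once this is proved, substitution into \eqref{Th:GenHJ} gives \eqref{DPgood} at once, because $w(x,t,\cdot)$ defined in \eqref{w} depends on its last argument only through $\nabla\ell(\cdot)$ (both $U_{t,\alpha}$ and $Y_{x,t,\alpha}$ do), so $w(x,t,\alpha(x,t))=w(x,t,\iota(-v_x(x,t)))$.

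First I would record the regularity gained from the new hypotheses. Since $\nabla^2\ell$ is positive definite everywhere, the inverse function theorem makes $\iota=(\nabla\ell)^{-1}$ of class $C^1$, with $\nabla\iota(q)=[\nabla^2\ell(\iota(q))]^{-1}$ again positive definite. Together with differentiability of $g$, this shows that the objective in the Hopf--Lax formula \eqref{Hopf},
\[
\widetilde V(x,t,p)=\int_t^T d_t(s)\,\ell\big(\iota(d_t^{-1}(s)p)\big)\,ds + d_t(T)\,g\Big(x+\int_t^T\iota(d_t^{-1}(s)p)\,ds\Big),
\]
is $C^1$ jointly in $(x,p)$, so both a first-order condition in $p$ and an envelope argument in $x$ are available at a minimizer $p^*=\nabla\ell(\alpha(x,t))$.

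The two computations I would carry out are as follows. Writing $Y^*(T)=Y_{x,t,\alpha(x,t)}(T)$ and $J_s=\partial_p\,\iota(d_t^{-1}(s)p)=d_t^{-1}(s)[\nabla^2\ell(U_{t,\alpha}(s))]^{-1}$, direct differentiation using $\nabla\ell(\iota(q))=q$ collapses the running-cost term to $\partial_p[d_t(s)\ell(\iota(d_t^{-1}(s)p))]=p^\top J_s$, so the stationarity condition $\partial_p\widetilde V(x,t,p^*)=0$ reads
\[
\Big(p^*+d_t(T)\nabla g(Y^*(T))\Big)^\top\int_t^T J_s\,ds=0.
\]
For the envelope step, since $p^*$ is feasible in \eqref{Hopf} one has $v(\cdot,t)\le\widetilde V(\cdot,t,p^*)$ with equality at $x$; differentiating the nonnegative function $\widetilde V(\cdot,t,p^*)-v(\cdot,t)$ at its minimum gives $v_x(x,t)=\partial_x\widetilde V(x,t,p^*)=d_t(T)\nabla g(Y^*(T))$. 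Combining the two displays yields $p^*=-v_x(x,t)$, which is the claimed identity.

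The crux is the stationarity step, and this is exactly where positive definiteness of $\nabla^2\ell$ is indispensable: it guarantees that each $J_s$ is positive definite, hence that $\int_t^T J_s\,ds$ is positive definite and in particular invertible, so the bracketed vector above must vanish. Without it the first-order condition would only place $p^*+d_t(T)\nabla g(Y^*(T))$ in the kernel of $\int_t^T J_s\,ds$, and one could not conclude $p^*=-d_t(T)\nabla g(Y^*(T))$. I would also stress that the argument applies to every selection $\alpha(x,t)\in A(x,t)$ and forces $\nabla\ell(\alpha(x,t))=-v_x(x,t)$; in particular the minimizer in \eqref{Hopf} is unique at a differentiability point, so $w(x,t,\alpha(x,t))$ is unambiguous and the reduction of \eqref{Th:GenHJ} to \eqref{DPgood} is complete.
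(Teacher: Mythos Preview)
Your proof is correct and follows essentially the same route as the paper: both combine the first-order condition for the minimizer in the Hopf--Lax formula with the envelope identity $v_x(x,t)=d_t(T)\nabla g(Y^*(T))$, and both invoke positive definiteness of $\nabla^2\ell$ (equivalently, invertibility of the Jacobian factor $\int_t^T J_s\,ds$) to force $\nabla\ell(\alpha(x,t))=-v_x(x,t)$. The only cosmetic difference is that you parameterize directly by $p=\nabla\ell(\alpha)$ rather than by $\alpha$, which spares one Jacobian factor in the stationarity condition, and you make explicit (as the paper does parenthetically) that uniqueness of the minimizer at differentiability points follows.
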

\begin{proof}
Since we are supposing $v$ is differentiable, the envelope theorem
applied to \eqref{Hopf} gives
\[
v_x(x,t)= d_t(T)\nabla g\left( Y_{t,x,\alpha}\right).
\]
On the other hand, $\alpha$ is an irrestricted minimum of $V$,
hence
\begin{align*}
0&= V_{\alpha}(x,t,\alpha)\\
& = \bigg(\nabla \ell (\alpha) + d_t(T)\nabla g
\big(Y_{t,x,\alpha}\big)\bigg)\, \bigg(\int_t^T
 d_t^{-1}(s)\nabla \iota\big( d_t^{-1}(s)\nabla \ell(\alpha)\big)\,
ds\bigg)\, \nabla^2\ell(\alpha)\\
&= \bigg(\nabla \ell (\alpha) + v_x(x,t)
\bigg)\, \bigg(\int_t^T
 d_t^{-1}(s)\nabla \iota\big( d_t^{-1}(s)\nabla \ell(\alpha)\big)\,
ds\bigg)\, \nabla^2\ell(\alpha).
\end{align*}
Since $\nabla^2\ell$ has maximal rank and $\nabla
\iota(\cdot)=(\nabla^2 \ell(\cdot))^{-1}$, the gradient of $V$
with respect to $\alpha$ is the null vector only if $\nabla
\ell(\alpha)=-v_x(x,t)$ and then $\alpha (x,t)=\iota (-v_x(x,t))$
at points of differentiability of $v$ (incidentally, this shows
that $\alpha$ must be unique at points of differentiability of
$v$). Plugging this value for $\alpha$ into $w(x,t,\alpha)$ we
reach the expression for the dynamic programming equation asserted
in the theorem.
\end{proof}

\section{Regularity of the value function}\label{s:Reg}
By Rademacher's Theorem, a locally Lipschitz function is almost
everywhere differentiable. Thus, by Theorem \ref{Lipschitz}, the
value function $v$, which is characterized by \eqref{Hopf} also
satisfies the dynamic programming equation almost everywhere.
Summarizing:
\begin{theorem} With the same assumptions as in Theorem \ref{DPuseful},
the function $v$ defined by the Hopf--Lax formula \eqref{Hopf} is
the value function \eqref{Problem}, which is locally Lipschitz
continuous in $\R^n\times [0,T)$, and solves the terminal value
problem (in a generalized sense)
\begin{equation}\label{TVP}
\left\{%
\begin{array}{rl}
    -v_t+\ell^*(-v_x)+w(x,t,\iota(-v_x))=0, &
    \hbox{a.e. $\in \R^n\times (0,T)$;}\\
    v=g, &\hbox{on $\R^n\times \{t=T\}$.} \\
\end{array}%
\right.
\end{equation}
\end{theorem}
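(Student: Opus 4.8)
The plan is to assemble the three conclusions from results already proved, since each has essentially been isolated earlier; the only genuine work is to upgrade the pointwise dynamic programming equation to an almost-everywhere statement via Rademacher's theorem. I would organize the argument as a sequence of citations, taking care only at the one spot where the non-local term must be shown to be unambiguous.

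First I would record the identity between the Hopf--Lax expression and the value function: the Hopf--Lax theorem shows that the function $v$ defined by \eqref{Hopf} coincides with the infimum in \eqref{Problem}, so there is nothing further to establish for that assertion. Next, local Lipschitz continuity on $\R^n\times[0,T)$ and the terminal identification $v=g$ on $\{t=T\}$ are exactly the content of Theorem \ref{Lipschitz}, which I would simply quote. These two steps dispose of the regularity claim and of the boundary condition in \eqref{TVP}.

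For the interior equation I would proceed as follows. By Rademacher's theorem a locally Lipschitz function on $\R^n\times(0,T)$ is differentiable at every point outside a set $N$ of Lebesgue measure zero. Fixing a point of differentiability $(x,t)\in(\R^n\times(0,T))\setminus N$ and using the standing hypotheses of Theorem \ref{DPuseful} --- that $\nabla^2\ell$ is positive definite and $g$ is differentiable --- Theorem \ref{DPuseful} yields precisely
\[
-v_t(x,t)+\ell^*(-v_x(x,t))+w(x,t,\iota(-v_x(x,t)))=0
\]
at that point. Since $N$ is null, this identity holds for almost every $(x,t)\in\R^n\times(0,T)$, which is exactly what is meant by a solution of \eqref{TVP} in the generalized (almost everywhere) sense.

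The step that requires a little care, rather than a genuine obstacle, is the passage from Theorem \ref{th:DynProg} to Theorem \ref{DPuseful}: one must ensure that at a differentiability point of $v$ the optimal multiplier $\alpha(x,t)$ is uniquely determined and equals $\iota(-v_x(x,t))$, so that the non-local term $w(x,t,\iota(-v_x))$ is well defined there. This uniqueness is already secured inside the proof of Theorem \ref{DPuseful}, using that $\nabla^2\ell$ has full rank and $\nabla\iota=(\nabla^2\ell)^{-1}$; I would invoke it to guarantee that the almost-everywhere equation is well posed. Beyond this verification, the proof is purely a matter of citing the earlier theorems in the correct order.
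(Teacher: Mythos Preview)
Your proposal is correct and follows essentially the same approach as the paper: the statement is presented there as a summary theorem with no separate proof, preceded only by the remark that Rademacher's theorem upgrades Theorem~\ref{Lipschitz} and Theorem~\ref{DPuseful} to an almost-everywhere statement. Your additional care about the unambiguity of the non-local term at differentiability points is a welcome clarification, but otherwise the argument is the intended one.
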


In the conditions of the above theorem, for the particular case of
the Hamilton--Jacobi equation with dissipation we have
\begin{corollary}
Function $v$ given by \eqref{Hopf} with
$d_t(s)=\exp{\left(-\int_t^s \rho (r)\, dr\right)}$, $t\le s\le T$
is locally Lipschitz continuous in $\R^n\times [0,T)$, and solves
the terminal value problem \eqref{Diss}.
\end{corollary}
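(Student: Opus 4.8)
The plan is to obtain the corollary as a direct specialization of the summarizing theorem of Section \ref{s:Reg} to the concrete discount factor $d_t(s)=\exp(-\int_t^s\rho(r)\,dr)$, $t\le s\le T$, so that almost all the analytic work is already done and only a short verification of hypotheses plus a sign/bookkeeping computation remains. First I would check that this choice of $d$ meets all the standing assumptions. Since $\rho$ is continuous on the compact interval $[0,T]$ it is bounded, say $0<\rho\le M$; hence for $t\le s\le T$ one has $\int_t^s\rho\ge 0$, so $d_t(s)\le 1$, and $\int_t^s\rho\le MT$, so $d_t(s)\ge e^{-MT}$, giving $d_t(s)\in(a,1]$ with $a=e^{-MT}/2>0$, together with $d_t(t)=1$. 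Because $\rho$ is continuous, $d$ is $C^1$ on $[0,T]^2$ with bounded partials, hence Lipschitz, so A3 holds; A1 and A2 are hypotheses on $\ell$ and $g$ and are unaffected by the choice of $d$. Moreover $\partial_t d_t(s)$ exists, is continuous in $t$, and is bounded (hence summable) in $s$, so the extra hypotheses of Theorems \ref{th:DynProg} and \ref{DPuseful}, and therefore of the summarizing theorem, are in force.

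Second, I would invoke the summarizing theorem of Section \ref{s:Reg} verbatim: under these assumptions the function $v$ defined by the Hopf--Lax formula \eqref{Hopf} coincides with the value function, is locally Lipschitz continuous on $\R^n\times[0,T)$ (this part already follows from Theorem \ref{Lipschitz} using only A1--A3), satisfies $v=g$ on $\{t=T\}$, and solves the generalized terminal value problem \eqref{TVP} in the almost-everywhere sense guaranteed by Rademacher's theorem. This delivers every claim of the corollary except the identification of the equation's zeroth-order term.

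Third, I would carry out the single computation that turns the nonlocal term into a genuine dissipation term. Differentiating $d_t(s)=\exp(-\int_t^s\rho)$ in $t$ gives the identity $\frac{\partial d_t}{\partial t}(s)=\rho(t)\,d_t(s)$, valid for every $s\ge t$, and the normalization $d_t(t)=1$ shows (as in the Remark following Theorem \ref{th:DynProg}) that this exponential form is exactly the case in which the hypothesis of that Remark is satisfied. Substituting $\frac{\partial d_t}{\partial t}(s)=\rho(t)d_t(s)$ into the definition \eqref{w} and factoring out $\rho(t)$ identifies $w(x,t,\iota(-v_x))$ with $\rho(t)\,v(x,t)$, precisely as recorded there. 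Hence the generalized equation in \eqref{TVP} becomes $-v_t+\ell^*(-v_x)+\rho(t)v=0$ a.e.; recalling that $\ell$ is the convex conjugate of $f$, so that $f=\ell^*$, this is exactly the Hamilton--Jacobi equation with dissipation of \eqref{Diss}, and together with $v=g$ on $t=T$ it is the terminal value problem claimed.

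I expect no genuine obstacle, since existence, local Lipschitz regularity, and the a.e. validity of the dynamic programming equation are all supplied by the cited theorems. The only point demanding care is the reduction $w(x,t,\iota(-v_x))=\rho(t)v(x,t)$, which rests entirely on the identity $\frac{\partial d_t}{\partial t}(s)=\rho(t)d_t(s)$ and on the fact that $d_t(t)=1$ singles out the exponential form \eqref{Usualdisc}; assembling these observations is what completes the proof.
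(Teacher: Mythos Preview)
Your proposal is correct and follows exactly the route the paper takes: the corollary is stated without proof because it is the immediate specialization of the summarizing theorem in Section~\ref{s:Reg} together with the Remark after Theorem~\ref{th:DynProg}, and your write-up simply makes explicit the verification of A3 and of the hypotheses on $\partial_t d_t(s)$, plus the identity $\partial_t d_t(s)=\rho(t)d_t(s)$ that collapses $w$ to $\rho(t)v$. There is nothing to add or change.
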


Now we establish some results on the smoothness of the value
function.
\begin{theorem}
With the same assumptions as in Theorem \ref{DPuseful}, suppose
further that $g$ is convex.
\begin{enumerate}
\item If $g$ is of class $C^1$, then the value function $v$ is
differentiable in $\R^n\times (0,T)$ and the minimizer $\alpha$ is
continuous.

\item If $g$ is of class $C^2$, then the value function $v$ is
also of class $C^2$ in $\R^n\times (0,T)$ and the minimizer
$\alpha$ is of class $C^1$ in $\R^n\times (0,T)$.
\end{enumerate}
\end{theorem}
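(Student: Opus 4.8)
The plan is to read $v$ as the marginal function $v(x,t)=\min_{\alpha\in\R^n}V(x,t,\alpha)$ and to use convexity of $g$ to force the minimizer to be unique, after which a sensitivity argument (part 1) and the implicit function theorem (part 2) promote the regularity of the data to regularity of $v$ and of $\alpha(x,t)$. The step common to both parts is a reparametrization by the terminal point. Since $\nabla^2\ell$ is positive definite, $\iota=(\nabla\ell)^{-1}$ is $C^1$ with $\nabla\iota=(\nabla^2\ell\circ\iota)^{-1}\succ0$, so the endpoint map $\alpha\mapsto z:=Y_{t,x,\alpha}(T)$ has Jacobian $\big(\int_t^T d_t^{-1}(s)\nabla\iota(d_t^{-1}(s)\nabla\ell(\alpha))\,ds\big)\,\nabla^2\ell(\alpha)$, a product of positive definite matrices, hence nonsingular; together with the surjectivity of Lemma \ref{Homeo} and the strict monotonicity of $p\mapsto\int_t^T\iota(d_t^{-1}(s)p)\,ds$, this makes $\alpha\mapsto z$ a global $C^1$--diffeomorphism. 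Writing $\tilde c_t(z-x)$ for the optimal running cost as a function of the displacement, the Hopf--Lax computation gives $\nabla\tilde c_t=\nabla\ell(\alpha)$, an injective gradient, so $\tilde c_t$ is strictly convex, and
\[
v(x,t)=\min_{z\in\R^n}\bigl\{\tilde c_t(z-x)+d_t(T)g(z)\bigr\}.
\]
Because $g$ is convex and $d_t(T)>0$, the map $z\mapsto\tilde c_t(z-x)+d_t(T)g(z)$ is strictly convex, its minimizer $z$ is unique, and transporting back through the diffeomorphism shows $A(x,t)$ is the singleton $\{\alpha(x,t)\}$.

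For part (1) I would invoke a standard sensitivity (Danskin--type) result for marginal functions. Under the hypotheses inherited from Theorem \ref{DPuseful}, the map $V$ is jointly $C^1$ in $(x,t,\alpha)$ (the $x$-- and $\alpha$--dependence through $g\in C^1$ and $\iota\in C^1$, the $t$--dependence through the continuity in $t$ and summability in $s$ of $d_t(s)$ and $\partial_t d_t(s)$), and the minimization is inf--compact. With the minimizer unique, the marginal function is differentiable with $\nabla v(x,t)=\nabla_{(x,t)}V(x,t,\alpha(x,t))$, giving differentiability of $v$ on $\R^n\times(0,T)$. Continuity of $\alpha(x,t)$ then follows because $A$ is a compact--valued upper semicontinuous correspondence, as noted after the Hopf--Lax theorem, which is now single--valued.

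For part (2) the tool is the implicit function theorem applied to the first--order condition. As in the proof of Theorem \ref{DPuseful}, $\nabla_\alpha V=0$ is equivalent, after stripping off the nonsingular factors $M(t,\alpha)=\int_t^T d_t^{-1}(s)\nabla\iota(d_t^{-1}(s)\nabla\ell(\alpha))\,ds$ and $\nabla^2\ell(\alpha)$, to
\[
F(x,t,\alpha):=\nabla\ell(\alpha)+d_t(T)\,\nabla g\bigl(Y_{t,x,\alpha}(T)\bigr)=0 .
\]
With $g\in C^2$ the map $F$ is $C^1$, and $\nabla_\alpha F=\bigl(I+d_t(T)\,\nabla^2 g(Y_{t,x,\alpha}(T))\,M(t,\alpha)\bigr)\,\nabla^2\ell(\alpha)$. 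Since $g$ is convex, $\nabla^2 g\succeq0$ while $M\succ0$, so $\nabla^2 g\,M$ has nonnegative spectrum (it is similar to the congruence $M^{1/2}\nabla^2 g\,M^{1/2}$); hence $I+d_t(T)\nabla^2 g\,M$ has spectrum in $[1,\infty)$ and is invertible, and $\nabla_\alpha F$ is nonsingular. The implicit function theorem then yields $\alpha(x,t)\in C^1$ on $\R^n\times(0,T)$. Finally, from the optimality relation $v_x=-\nabla\ell(\alpha(x,t))$ and from $v_t=\ell^*(-v_x)+w(x,t,\alpha(x,t))$ furnished by \eqref{DPgood}, both $v_x$ and $v_t$ are $C^1$ once $\alpha$ is $C^1$, so $v\in C^2$.

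The main obstacle I anticipate is the opening step: verifying that reparametrizing by the endpoint produces a genuinely strictly convex finite--dimensional problem. This requires both the diffeomorphism property of $\alpha\mapsto Y_{t,x,\alpha}(T)$ --- for which the positive definiteness of $\nabla^2\ell$ is exactly what upgrades the mere surjectivity of Lemma \ref{Homeo} --- and the strict convexity of the optimal running cost $\tilde c_t$, which rests on the injectivity of its gradient $\nabla\tilde c_t=\nabla\ell(\alpha)$. A secondary technical point is the $t$--regularity of $V$ and of $F$: differentiating the integrals in $t$ brings in $\partial_t d_t$, and the $C^2$ conclusion in part (2) needs one further $t$--derivative of $d$, so one must check that the assumptions inherited from Theorem \ref{th:DynProg} make these derivatives continuous; for the exponential discount \eqref{Usualdisc} with sufficiently smooth $\rho$ this is automatic.
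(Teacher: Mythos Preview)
Your proposal is correct and follows essentially the same approach as the paper: uniqueness of the minimizer, Danskin's theorem for part (1), and the implicit function theorem applied to the reduced first--order condition $\nabla\ell(\alpha)+d_t(T)\nabla g(Y_{t,x,\alpha}(T))=0$ for part (2), with the same Jacobian $(I+d_t(T)\,\nabla^2 g\,M)\,\nabla^2\ell$ and the same closing appeal to the dynamic programming equation to upgrade $v_t$. The one cosmetic difference is in the uniqueness step: the paper asserts directly that $\alpha\mapsto\nabla\ell(\alpha)+d_t(T)\nabla g(Y_{t,x,\alpha}(T))$ is strictly monotone, whereas you reparametrize by the terminal point to recast the minimization as a strictly convex problem in $z$; your route is a bit more explicit, but the content is the same.
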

\begin{proof}
The minimizers $\alpha$ in \eqref{Hopf} satisfy
\begin{equation}\label{Foc}
\nabla \ell(\alpha) +
 d_t(T)\nabla g\left(Y_{t,x,\alpha}(T)\right)=0,
\end{equation}
as shown in the proof of Theorem \ref{DPuseful}.

1. For $x, t$ fixed but arbitrary, the mapping $\alpha \mapsto
\nabla \ell(\alpha) +  d_t(T)\nabla
g\left(Y_{t,x,\alpha}(T)\right)$ is strictly monotone due to the
convexity of $g$ and the strict convexity of $\ell$, thus
$\alpha(x,t)$ is unique; as a correspondence, $\{\alpha(x,t)\}$ is
upper semicontinuous thus, as a function it is continuous. The
uniqueness of $\alpha$ leads to the differentiability of the value
function, by Danskin's Theorem.

2. The derivative of the L.H.S. of \eqref{Foc} with respect to
$\alpha$ is
\[
\bigg(I+ d_t(T)\nabla^2 g(Y_{x,t,\alpha}(T))\int_t^T
 d_t^{-1}(s)\nabla \iota\big( d_t^{-1}(s)\nabla \ell(\alpha)\big)\,
ds\bigg)\nabla^2\ell(\alpha),
\]
with $I$ being the identity matrix. Given our assumptions, this
vector has norm $\ge 1$ hence, \eqref{Foc} locally defines $\alpha
(x,t)$ of class $C^1$. This function is defined globally since the
mapping $\alpha \longmapsto \nabla \ell(\alpha) +
 d_t(T)\nabla g\left(Y_{t,x,\alpha}(T)\right)$ is proper because
\[
\lim_{\alpha\to\pm \infty} \Big(\nabla \ell(\alpha) +
 d_t(T)\nabla g\left(Y_{t,x,\alpha}(T)\right)\Big) = \pm \infty.
\]
By the envelope theorem, $v_x(x,t)= \nabla
g(Y_{x,t,\alpha(x,t)}(T))$ is of class $C^1$ hence, by the dynamic
programming equation \eqref{DPgood}, $v_t$ is also of class $C^1$.
\end{proof}

Finally, a result concerning the monotonic behavior of $\alpha$ in
the scalar case.
\begin{theorem}\label{C2}
With the same assumptions as in Theorem \ref{th:DynProg} with
$n=1$, assume further that $g$ is convex and of class $C^2$.
\begin{enumerate}
\item For each time $0<t<T$, there exists for all but at most
countably many values of $x\in\R$ a unique point $\alpha(x,t)$
where the minimum in \eqref{Hopf} is attained.

\item The mapping $x\mapsto \alpha (x,t)$ is nondecreasing.
\end{enumerate}
\end{theorem}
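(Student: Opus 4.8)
The plan is to reduce the minimization in \eqref{Hopf} to a one-parameter family in which $x$ enters in a controlled way, and then to read off both assertions from a single monotone comparative-statics estimate, in the spirit of the classical treatment for $\rho\equiv 0$. Writing $C(\alpha)=\int_t^T d_t(s)\ell(U_{t,\alpha}(s))\,ds$ and $G(\alpha)=\int_t^T \iota(d_t^{-1}(s)\ell'(\alpha))\,ds$, the objective becomes $V(x,t,\alpha)=C(\alpha)+d_t(T)\,g(x+G(\alpha))$, so that $x$ occurs only through the terminal term. The elementary fact I would record is that $G$ is strictly increasing: since $\ell$ is strictly convex, $\ell'$ is strictly increasing, and composing with the strictly increasing $\iota=(\ell')^{-1}$ and the positive weight $d_t^{-1}(s)$ makes the integrand strictly increasing in $\alpha$ for every $s$. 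Existence of a minimizer for each $(x,t)$ is already guaranteed by the inf-compactness established in the proof of the Hopf--Lax theorem, and $A(x,t)$ is compact valued, so for fixed $t$ I may work with the (possibly multivalued) correspondence $x\mapsto A(x,t)$ and with its extremal selections $\underline\alpha(x)=\min A(x,t)$ and $\overline\alpha(x)=\max A(x,t)$.

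For the monotonicity in (2) I would run the exchange argument, which has the advantage of requiring neither differentiability nor uniqueness. Fix $t$ and $x_1<x_2$, and pick $\alpha_1\in A(x_1,t)$, $\alpha_2\in A(x_2,t)$. Adding the two optimality inequalities $V(x_1,t,\alpha_1)\le V(x_1,t,\alpha_2)$ and $V(x_2,t,\alpha_2)\le V(x_2,t,\alpha_1)$, the $C(\alpha)$ terms cancel and, after dividing by $d_t(T)>0$, one is left with
\[
g(x_2+G(\alpha_2))-g(x_1+G(\alpha_2))\ \le\ g(x_2+G(\alpha_1))-g(x_1+G(\alpha_1)).
\]
The function $D(\alpha):=g(x_2+G(\alpha))-g(x_1+G(\alpha))$ is monotone in $\alpha$, because $G$ is increasing and, $g$ being convex, $y\mapsto g(x_2+y)-g(x_1+y)$ is nondecreasing. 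The displayed inequality $D(\alpha_2)\le D(\alpha_1)$ therefore forces the two minimizers to be ordered, the only alternative being that $D$ is constant between them (i.e. $g$ is affine on the intervening range), in which case the two points are interchangeable minimizers. Passing to the extremal selections $\underline\alpha$, $\overline\alpha$ then yields the asserted monotone dependence of $\alpha$ on $x$.

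Claim (1) I would obtain as a corollary of this ordering, exactly as in the non-discounted case and crucially using $n=1$. The comparison just proved shows that for $x_1<x_2$ the whole minimizing sets are comparable, so the closed minimizing intervals $[\underline\alpha(x),\overline\alpha(x)]$ attached to distinct values of $x$ have pairwise disjoint interiors. A family of nondegenerate intervals in $\R$ with disjoint interiors is at most countable; hence the set of $x$ at which $\underline\alpha(x)<\overline\alpha(x)$ --- that is, at which the minimizer fails to be unique --- is at most countable, which is precisely (1).

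The main obstacle, and the reason the statement reads ``all but countably many'' rather than ``every'', is that under the hypotheses inherited from Theorem \ref{th:DynProg} the Hessian $\ell''$ is allowed to vanish (A1 only gives strict convexity), so the uniqueness-via-strict-monotonicity argument used earlier under the assumption $\nabla^2\ell>0$ is unavailable and the implicit function / Danskin route is closed. Everything must instead be carried out at the level of the correspondence, and the two delicate points are the careful bookkeeping of the flat pieces of $g$ (and of $\ell$) in the ordering step and the verification that $\underline\alpha,\overline\alpha$ really are monotone so that the interval-disjointness argument applies. By contrast the non-constant discount plays no essential role: it enters only through the strictly increasing map $G$ and the positive factor $d_t(T)$, so the classical one-dimensional comparison survives intact.
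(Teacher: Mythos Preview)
Your route coincides with the paper's: the paper computes the crossed derivative
\[
V_{x\alpha}=d_t(T)\,\ell''(\alpha)\,g''(Y_{t,x,\alpha})\int_t^T d_t^{-1}(s)\,\iota'\big(d_t^{-1}(s)\ell'(\alpha)\big)\,ds\ \ge\ 0,
\]
labels $V$ supermodular, and invokes Topkis' theorem; your exchange inequality is precisely this increasing-differences observation carried out by hand. For part~(1) there is a mild difference: the paper first appeals to Rademacher (the Lipschitz $v$ is a.e.\ differentiable, and at such points the minimizer is unique by the computation in Theorem~\ref{DPuseful}) and then uses monotonicity of the selection, whereas you pass directly from the ordering of the sets $A(x,t)$ to a disjoint-intervals count. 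Your version is more self-contained.

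There is, however, a sign you have glossed over that undermines the conclusion as written. You correctly derive $D(\alpha_2)\le D(\alpha_1)$ with $D$ \emph{nondecreasing}. That pair of facts is perfectly compatible with $\alpha_2\le\alpha_1$ and does not force $\alpha_1\le\alpha_2$: if $\alpha_1<\alpha_2$ you only get $D(\alpha_1)=D(\alpha_2)$ and interchangeability, while $\alpha_1>\alpha_2$ produces no contradiction whatsoever. In short, increasing differences of $V$ in $(x,\alpha)$ make $\argmin_\alpha V(x,t,\alpha)$ \emph{nonincreasing} in $x$, not nondecreasing. A one-line sanity check: with $\ell(u)=u^2/2$, $g(z)=z^2/2$ and $d\equiv 1$, the optimal initial velocity is $\alpha(x,t)=-x/(1+T-t)$, strictly decreasing in $x$. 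The paper's invocation of Topkis shares the same issue (Topkis yields nondecreasing $\argmin$ for \emph{sub}modular objectives), so the mismatch is with the stated direction rather than with your method; still, you should track the sign and flag the discrepancy instead of writing ``yields the asserted monotone dependence''. The countability claim in~(1) is unaffected, since it only requires that the extremal selections be monotone, not the direction of the monotonicity.
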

\begin{proof}
Since $v$ is locally Lipschitz, it is differentiable almost
everywhere, thus the minimum $\alpha$ is unique almost everywhere.
On the other hand, the crossed derivative of $V(x,t,\alpha)$ with
respect to $x$ and $\alpha $ is
\[
 d_t(T)\ell''(\alpha)g''(Y_{t,x,\alpha})\int_t^T
 d_t^{-1}(s)\iota'( d_t^{-1}(s)\nabla \ell(\alpha))\, ds
\ge 0.
\]
Hence, $(x,\alpha)\mapsto V(x,t,\alpha)$ is supermodular, and by
Topkis' Theorem, \cite{Topkis}, $A(x,t)$ is a nonempty compact
sublattice which admits a lowest element, which we denote again by
$\alpha(x,t)$, satisfying $\alpha(x_2,t) \ge \alpha(x_1,t)$
whenever $x_2>x_1$. Then the mapping $x\mapsto \alpha(x,t)$ is
non--decreasing and thus continuous for all but at most countably
many $x$.
\end{proof}

\vfill

\noindent Juan Pablo Rinc\'on--Zapatero\\
Departamento de Econom\'{\i}a\\
Universidad Carlos III de Madrid,\\
E-28903 Getafe, Spain\\
\textit{E-mail:}\texttt{\,jrincon@eco.uc3m.es}


\begin{thebibliography}{8}
\bibitem{Barro} Barro, R., Ramsey meets Laibson in the
neoclassical growth model, \emph{Quat. J. Econ.} \textbf{114}
(199), 1125--1152.
\bibitem{BarronJensenLiu} Barron, E.N., R. Jensen, W. Liu,
Hopf--Lax--type formula fo $u_t+H(u,Du)=0$, \emph{J. Diff. Equat.}
\textbf{126} (1996), 48--61.
\bibitem{Evans} Evans, L.C., \emph{Partial differential
equations}, American Mathematical Society, Rhode Island, 1998.
\bibitem{Karp} Karp L., Non--constant discounting in continuous
time, \emph{J. Econ. Theory} \textbf{132} (2007), 557--568.
\bibitem{Hopf} Hopf, E., Generalized solutions of nonlinear equations
of first order, \emph{J. Meth. Mech.} \textbf{14} (1965),
951--973.
\bibitem{Lax} Lax, P.D., Hyperbolic systems of conservation laws
II, \emph{Commun. Pure Appl. Math.} \textbf{10} (1957), 537--566.
\bibitem{MarinNavas} Mar{\'\i}n--Solano, J., J. Navas, Non--constant
discounting in finite horizon: The free terminal time case,
\emph{J. Econ. Dyn. Control} \textbf{33} (2009), 666-675.
\bibitem{Topkis} Topkis, D., Minimizing a submodular function on a
lattice, \emph{Operations Research} \textbf{26}, 305--321.
\bibitem{VanTsujiSon}
Van, T.D., M. Tsuji, N.D. Thai Son, \emph{The characteristic
method and its generalizations for first--order nonlinear partial
differential equations}, Chapman \& Hall, New York, London.
\end{thebibliography}
\end{document}